\DeclareMathOperator*{\argmin}{arg\,min}
\DeclareMathOperator*{\argmax}{arg\,max}
    \pgfplotsset{
        cycle list/.define={my marks}{
            every mark/.append style={solid,fill=\pgfkeysvalueof{/pgfplots/mark list fill}},mark=*\\
            every mark/.append style={solid,fill=\pgfkeysvalueof{/pgfplots/mark list fill}},mark=square*\\
            every mark/.append style={solid,fill=\pgfkeysvalueof{/pgfplots/mark list fill}},mark=triangle*\\
            every mark/.append style={solid,fill=\pgfkeysvalueof{/pgfplots/mark list fill}},mark=diamond*\\
        },
    }
\crefname{hypothesis}{Hypothesis}{Hypotheses}
\title{Shifted Linear Systems Solver\thanks{Submitted to the editors \today.}}
\author{
Hussam Al Daas\thanks{Scientific Computing Department, STFC, Rutherford Appleton Laboratory, Harwell Campus, Didcot, Oxfordshire, OX11 0QX, UK  (\email{hussam.al-daas@stfc.ac.uk}).}
and 
Davide Palitta\thanks{Dipartimento di Matematica, (AM)2 , Alma Mater Studiorum - Università di Bologna, Piazza di
Porta San Donato 5, 40126 Bologna, Italy(\email{davide.palitta@unibo.it}).}
}
\newcommand{\C}{\mathbb{C}}
\definecolor{darkpastelgreen}{rgb}{0.01, 0.75, 0.24}
\definecolor{burgundy}{rgb}{0.5, 0.0, 0.13}
\title{Minimal residual rational Krylov subspace method for sequences of shifted linear systems}
\author{
Hussam Al Daas\thanks{Scientific Computing Department, STFC, Rutherford Appleton Laboratory, Harwell Campus, Didcot, Oxfordshire, OX11 0QX, UK  (\email{hussam.al-daas@stfc.ac.uk}).}
\and 
Davide Palitta\thanks{Dipartimento di Matematica, (AM)2 , Alma Mater Studiorum - Università di Bologna, Piazza di
Porta San Donato 5, 40126 Bologna, Italy (\email{davide.palitta@unibo.it}).}
}
\newtheorem{experiment}[theorem]{Example}
\let\oldexperiment\experiment
\renewcommand{\experiment}{\oldexperiment\normalfont}
\begin{document}

\maketitle
\renewcommand{\thefootnote}{\fnsymbol{footnote}}
\maketitle \pagestyle{myheadings} \thispagestyle{plain}
\markboth{ H.\ AL DAAS, D.\ PALITTA}{MINIMAL RESIDUAL RATIONAL KRYLOV FOR SHIFTED LINEAR SYSTEMS}

\begin{abstract} The solution of sequences of shifted linear systems is a classic problem in numerical linear algebra, and a variety of efficient methods have been proposed over the years. Nevertheless, there exist challenging scenarios {\color{blue} for which there is still} a lack of {\color{blue}performant} solvers. For instance, state-of-the-art procedures {\color{blue}based on restarting may} struggle to handle nonsymmetric problems where the shifts are {\color{blue}possibly} complex numbers that do not come as conjugate pairs. We design a novel projection strategy based on the rational Krylov subspace equipped with a minimal residual condition. We also devise a novel pole selection procedure, tailored to our problem, providing poles for the rational Krylov basis construction that yield faster convergence than those computed by available general-purpose schemes.
A panel of diverse numerical experiments shows that our novel approach performs better than state-of-the-art techniques, especially on the very challenging problems mentioned above.
\end{abstract}

\begin{keywords}
  Shifted linear systems, Sylvester matrix equations, rational Krylov method, minimal residual condition.
\end{keywords}

\begin{AMS}
   	65F10, 65F45
\end{AMS}

\section{Introduction}\label{sec:introduction}

The solution of sequences of shifted linear systems of the form
\begin{equation}\label{eq:main}
 (A+s_i I_n)x^{(i)} = b^{(i)}, \quad i=1,\ldots,\ell,
\end{equation}
    with $A\in{\color{blue}\mathbb{C}}^{n\times n}$, $I_n$ the identity matrix of order $n$, $s_i$ a possibly complex number, and $b^{(i)}\in\mathbb{R}^n$ is a classic problem in numerical linear algebra. For the sake of simplicity in the presentation, we assume the right-hand side in~\eqref{eq:main} to be independent of the parameter index $i$, namely $b\equiv b^{(i)}$ for all $i=1,\ldots,\ell$. However, many of the results presented in this {\color{blue} paper} can be extended to a more general framework; see section~\ref{sec:multiple_rhs}.

Equations~\eqref{eq:main} {\color{blue} is encountered} in numerous application settings, such as control theory and model order reduction techniques~\cite{MOR2015}, structural dynamics~\cite{FERIANI2000},
lattice quantum-chromodynamics~\cite{Frommer1}, Tikhonov regularization~\cite{Frommer2},
the solution of time-dependent PDEs~\cite{TimeDepPDES}, and the inner step in sophisticated eigensolvers~\cite{FEAST}, to name a few.

The ubiquity of~\eqref{eq:main} has driven the interest of our community into developing many different strategies for the solution of this problem over the years; see, e.g., the survey~\cite{Survey} and the references therein. Most of these techniques build upon the construction of the polynomial Krylov subspace
\begin{equation}\label{eq:PK}
\mathbf{K}_m(A,b)=\text{span}\{{\color{blue}b},A{\color{blue}b},\ldots,A^{m-1}{\color{blue}b}\}.
\end{equation}
In these schemes, solutions of the form
\begin{equation}\label{eq:sol}
 x_m^{(i)}=V_my_m^{(i)},
\end{equation}
 are constructed where the columns of $V_m=[v_1,\ldots,v_m]\in\mathbb{C}^{n\times m}$ form an orthonormal basis of~\eqref{eq:PK} whereas $y_m^{(i)}\in\mathbb{C}^m$. The backbone of polynomial Krylov subspace methods for shifted linear systems is the
{\color{blue}\emph{shift-invariance}} property of~\eqref{eq:PK}, namely the following Arnoldi relation
\begin{equation}\label{eq:shiftArnoldi}
(A+s_iI_n)V_m=V_m(H_m+s_iI_m)+h_{m+1,m}v_{m+1}e_m^T=V_{m+1}\left(\underline{H}_m+s_i \begin{bmatrix}
I_m\\
0\\                                                                                     \end{bmatrix}\right),
\end{equation}
holds true for any shift $s_i$ in~\eqref{eq:main}. {\color{blue} The underlined matrix $\underline{X}$ in this work denotes a matrix formed by concatenating a square matrix $X$ with an additional last row or a block row for block methods.}
In particular, in~\eqref{eq:shiftArnoldi}, $H_m\in{\color{blue}\mathbb{C}}^{m\times m}$ and $h_{m+1,m}\in{\color{blue}\mathbb{C}}$ are quantities related to the orthonormalization step of the Arnoldi process and $\underline{H}_m=[H_m;h_{m+1,m}e_m^T]\in{\color{blue}\mathbb{C}}^{(m+1)\times m}$.
The relation~\eqref{eq:shiftArnoldi} explains the use of the shift-independent space~\eqref{eq:PK} employed in the construction of the solutions~\eqref{eq:sol}. Indeed,~\eqref{eq:shiftArnoldi} allows us to construct a single basis $V_m$ and deal with shifted quantities only at the projected level.

Once $V_m$ is formed, the second ingredient in the construction of the solution $x_m^{(i)}$ is the vector $y_m^{(i)}$ which contains the coefficients of the linear combination of the basis vectors providing the current solution. The computation of this vector $y_m^{(i)}$ is one of the main aspects characterizing a given Krylov subspace method. Some methods impose a Galerkin condition on the residual vectors $r_m^{(i)}=b-(A+s_iI)x_m^{(i)}$, namely they require the $r_m^{(i)}$'s to be orthogonal to $\mathbf{K}_m(A,b)$. This condition is equivalent to computing $y_m^{(i)}$ as the solution of the projected shifted linear system
$$(H_m+s_iI)y_m^{(i)}=\beta e_1,\quad \beta=\|{\color{blue}b}\|;$$
see, e.g.,~\cite{Saad2003}.

Other methods impose a minimal residual condition and compute $y_m^{(i)}$ as
$$y_m^{(i)}=\argmin_{y\in\mathbb{C}^m}\|{\color{blue}b}-(A+s_i I)V_my\|=\argmin_{y\in\mathbb{C}^m}\left\|\beta e_1 - \left(\underline{H}_m+s_i \begin{bmatrix}
I_m\\
0\\                                                                                     \end{bmatrix}\right)y\right\|,
$$
see, e.g.,~\cite{Saad2003}.

In many cases, polynomial Krylov subspace methods for~\eqref{eq:main} require the construction of a large subspace to meet the prescribed level of accuracy. This leads to severe drawbacks in terms of both computational cost and storage demand, especially for those methods where a full orthogonalization step and the allocation of the whole $V_m$ are necessary. Different strategies have been developed to overcome these issues. {\color{blue} Even though some contributions aiming to construct effective preconditioning operators for~\eqref{eq:main} can be found in the literature (see, e.g.,~\cite{Bau2015,Saibaba2013,Bakhos2017}), preconditioning is seldom an option as designing good preconditioners for all the shifted linear systems in the sequence~\eqref{eq:main} is rather difficult. Therefore, the restarting paradigm is often the preferred option in this setting; see, e.g.,~\cite{FOM,RestartedGMRES,BicgStab}.} However,
it is well-known that restarting needs collinearity of the residual vectors $r_m^{(i)}$ to be successful and this feature may be difficult to achieve in minimal residual methods unless rather strict conditions on $A$ and the shifts $s_i$ are considered; see, e.g.,~\cite{BicgStab}.

Even though they have been designed for solving more general problems like general parametrized linear systems, recycling techniques can also be employed for the solution of shifted linear systems. The main underlying idea here is to solve a given problem, e.g., by a Krylov subspace method, and reuse the most significant part of this approximation space in the solution of another problem. See, e.g.,~\cite{Rec1,rec2,Gaul2014,AldGHR21} for more details on recycling techniques. 

In principle, one could combine restart and recycling. However, this integration is not always possible due to the need for residual collinearity to restart; see, e.g.,~\cite[Section 7.1.1]{Survey} and~\cite[Theorem 1]{rec2}.

A completely different approach consists {\color{blue}of} employing approximation spaces different from~\eqref{eq:PK} in the construction of the solutions~\eqref{eq:sol}. This novel point of view has been proposed for the first time in~\cite{Simoncini2010} where tools coming from the matrix equation literature have been adapted to the solution of~\eqref{eq:main}. Indeed, it is easy to show that the sequence of shifted linear systems in~\eqref{eq:main} with $b\equiv b^{(i)}$, for all $i$, can be reformulated in terms of a single Sylvester matrix equation of the form
\begin{equation}\label{eq:Sylv}
 AX+XS=b\mathbf{1}_\ell^T,
\end{equation}
where $X=[x^{(1)},\ldots,x^{(\ell)}]\in\mathbb{C}^{n\times \ell}$, $\mathbf{1}_\ell\in\mathbb{R}^\ell$ is the vector of all ones, and
$$S=\begin{bmatrix}
     s_1 & & \\
     & \ddots & \\
     &&s_\ell\\
    \end{bmatrix}\in\mathbb{C}^{\ell\times\ell},$$
is a diagonal matrix containing the shifts on the main diagonal.

In this setting, an approximate solution to~\eqref{eq:Sylv} of the form $X_m=V_mY_m$ is sought.
In~\cite{Simoncini2010}, the columns of $V_m\in{\color{blue}\mathbb{C}}^{n\times 2m}$ represent an orthonormal basis for the \emph{extended} Krylov subspace
\begin{equation}\label{eq:EK}
\mathbf{EK}_m(A,b)=\text{span}\{b,A^{-1}b,Ab,A^{-2}b,\ldots,A^{m-1}b,A^{-m}b\},
\end{equation}
whereas $Y_m\in\mathbb{C}^{2m\times \ell}$ is computed by imposing a Galerkin condition on the residual matrix $AV_mY_m+V_mY_mS-b\mathbf{1}_\ell^T$. As before, this condition is equivalent to computing $Y_m$ as the solution of a projected problem, which in this case amounts to the following Sylvester equation
$$T_mY_m+Y_mS=\beta e_1\mathbf{1}_\ell^T,\quad T_m=V_m^*AV_m,\;\beta=\|b\|.$$
In~\cite{Simoncini2010} it has been demonstrated that the use of the extended Krylov subspace~\eqref{eq:EK} in place of its polynomial counterpart~\eqref{eq:PK} as approximation space leads to a much faster convergence in terms of number of iterations so that restarting is not really necessary. However, the computation of $V_m$ is now significantly more expensive than before due to solving linear systems with $A$ at each iteration.

Our novel method takes inspiration from what has been proposed in~\cite{Simoncini2010} but differs in both the selection of the approximation space and the computation of the matrix $Y_m$. In our scheme, we assume that we can afford solving a handful of shifted linear systems by, e.g., a sparse direct solver, but that we do not have {\color{blue}enough} resources for solving all the $\ell$ linear systems in~\eqref{eq:main}.
This can be the case when, e.g., we need to solve~\eqref{eq:main} on a laptop where we may be able to solve $(A+s I)x=b$ but the lack of a proper parallel computing environment makes the sequential solution of all the systems in~\eqref{eq:main} extremely time-consuming, {\color{blue} especially for large $\ell$. This issue may be overcome by working with HPC infrastructures. However, the parallel solution of $\ell$ large-scale linear systems is still an extremely energy-draining operation so that being able to solve much fewer linear systems can remarkably lower the carbon footprint of the overall solution procedure}.

In our novel approach, we propose to employ the rational Krylov subspace
\begin{equation}\label{eq:RK}
 \mathbf{K}_{m+1}(A,b,\bm{\xi}_{m})=\text{span}\left\{b,(A+\xi_1I)^{-1}b,\ldots,\prod_{i=1}^{m}(A+\xi_iI)^{-1}b\right\},
\end{equation}
in place of~\eqref{eq:EK} as approximation space.
In~\eqref{eq:RK}, $\bm{\xi}_{m}=(\xi_1,\ldots,\xi_{m})^T\in\mathbb{C}^{m}$ is a set of poles
that can be either given a priori or computed on the fly as the space expands. Various strategies for the computation of these poles are available in the literature; see, e.g.,~\cite{Poles1,Gue13b}. Selecting good poles is crucial to obtain a {\color{blue}performant} rational Krylov subspace method for the problem at hand. In {\color{blue}section~\ref{Poles selection}}, we illustrate an ad-hoc {\color{blue}greedy} routine taking into account the peculiar structure of our problem~\eqref{eq:Sylv}. Numerical experiments show that our approach often performs better than off-the-shelf pole-selection strategies.

{
\color{blue}
The rational Krylov relation that holds at each iteration of the rational Arnoldi procedure \cite{blockrationalKrylov} to construct $V_{m+1}$, the orthonormal basis for $\mathbf{K}_{m+1}(A,b,\bm{\xi}_{m})$, is
\begin{equation}
    \label{eq:rationalArnoldi_HK}
    AV_{m+1}\underline{K}_m = V_{m+1}\underline{H}_m,
\end{equation}
where $\underline{K}_m$ and $\underline{H}_m$ are $(m+1)\times m$ unreduced Hessenberg matrices that result from the orthonormalization procedure.
In this work we propose to seek a solution under the form $X_m = V_{m+1}\underline{K}_mY_m$,
}
where we compute $Y_m\in\mathbb{C}^{m\times \ell}$ by imposing a minimal residual condition instead of a Galerkin one as done in~\cite{Simoncini2010}.
In contrast to what happens when~\eqref{eq:PK} is adopted as approximation space, this appealing minimization property does not come with the drawback of forcing collinearity of the residuals, which may not even exist. Indeed, in our extensive numerical experience, the rational Krylov subspace one needs to construct is always of very moderate dimension, so that no restart has to be performed.

{\color{blue}
In~\cite{GuZhouLin2007} the authors proposed to employ flexible GMRES for~\eqref{eq:main}, preconditioned with operators of the form $\mathcal{P}_j=A+\alpha_jI$ for a small number of predefined scalars $\alpha_j$. Even though this can be viewed as a first, timid step towards our rational Krylov subspace method, the storage of the preconditioned and unpreconditioned bases in flexible strategies makes the need of restarting even more urgent; this carries over all the difficulties related to restarting we mentioned above.

We would like to mention that a peculiar rational Krylov subspace is adopted in~\cite{illposed} for the solution of ill-posed problems. In particular, the authors propose to construct what they call a \emph{mixed} space that we can formally view as~\eqref{eq:RK} where at every other iteration an infinite pole is employed, i.e., $\xi_{2k}=\infty$, $k\geq 1$, whereas the poles with odd index are possibly good regularization parameters chosen from a given set. Even though the ultimate goal in~\cite{illposed} is rather different from ours, we believe that the rational Krylov approach presented here may be seen as a generalization of the mixed approach from~\cite{illposed} when it comes to Tikhonov regularization. The interesting extension of our approach to the solution of ill-posed problems will be studied elsewhere in the near future.}

Here is a synopsis of the paper. In section~\ref{sec: lowrank sol}, we recall some results depicting scenarios {\color{blue}in which} we can indeed expect the solution $X$ to~\eqref{eq:Sylv} to be numerically low rank. This property is key for the success of projection-based solvers for matrix equations. Section~\ref{Rational Krylov approach} {\color{blue}contains} the main contribution of this paper, namely the derivation of a novel minimal residual rational Krylov subspace method for~\eqref{eq:main}. As already mentioned, the performance of this method is related to the quality of the poles selected for the construction of the space~\eqref{eq:RK}. In section~\ref{Poles selection}, an effective pole-selection strategy tailored to the problem~\eqref{eq:main} is presented. The construction of the rational Krylov subspace~\eqref{eq:RK} requires the solution of shifted linear systems with $A$. At first, we derive our new numerical scheme by assuming that we are able to solve these linear systems by a direct method. However, this may not always be the case. Therefore, in section~\ref{A fully iterative variant} we discuss a fully iterative variant of our minimal residual rational Krylov scheme, where the matrix $A$ is required only in the computation of matrix-vector products. 
{\color{blue}In section~\ref{sec:multiple_rhs} we generalize our algorithm to problems~\eqref{eq:main} whose right-hand sides $b^{(i)}$'s are not fixed. This is attained by using block rational Krylov subspaces.} 
The numerical experiments presented in section~\ref{sec:numerical_experiments} show how our novel solver outperforms state-of-the-art techniques for shifted linear systems on very demanding problems like those where, e.g., the shifts $s_i$'s are complex and do not come in complex conjugate pairs. The paper ends with some conclusions in section~\ref{sec:conclusion}.

\section{When to expect a low-rank solution}\label{sec: lowrank sol}
The main contribution of this paper is the design of a projection method for~\eqref{eq:Sylv} that relies on the rational Krylov subspace~\eqref{eq:RK} and is equipped with a minimal residual condition. This scheme belongs to the rather broad family of \emph{low-rank} methods that are effective whenever the exact solution can be well approximated by a low-rank matrix. It is thus natural to ask when the solution $X$ to~\eqref{eq:Sylv} has a fast decay in its singular values, namely when the $\ell$ solutions to~\eqref{eq:main} turn out to be largely linearly dependent.

Intuition suggests that whenever the shifts $s_i$'s are all ``close'' to each other, then the solutions $x^{(i)}$ will be as well, leading to a matrix $X=[x^{(1)},\ldots,x^{(\ell)}]$ of low rank. To the best of our knowledge, this {\color{blue} property has been exploited only in~\cite{Grasedyck04} where Grasedyck shows the decay of the singular values of the solution $X$ to a general Sylvester equation $AX-XB=C$ assuming the eigenvalues of $B$ to be clustered. On the other hand, the techniques emloyed to get these results require $\text{Re}(\lambda-\gamma)<0$ for all eigenvalues $\lambda$ of $A$ and $\gamma$ of $B$. This may be a too restrictive hypothesis in case of~\eqref{eq:Sylv}.}

A different tool to understand when our strategy may be successful is the convergence theory behind rational Krylov methods for Sylvester equations; see, e.g.,~\cite{ConvRKSylv}. However, many of these results rely on the field of values of the coefficient matrices overlooking the diagonal pattern of $S$ in our case.

Assuming $A$ to be diagonalizable with $A=Q\Lambda Q^{-1}$, $\Lambda=\text{diag}(\lambda_1,\ldots,\lambda_n)$, we can transform~\eqref{eq:Sylv} into
\begin{equation}\label{eq:transformedeq}
    \Lambda \widetilde X + \widetilde X S= \widetilde b\mathbf{1}_\ell^T,
\end{equation}
where $\widetilde X=Q^{-1}X$ and $\widetilde b=Q^{-1}b$.
Clearly, any results on the low-rank approximability of $\widetilde X$ carries over to $X$ as $X=Q\widetilde X$. 

Since both $\Lambda$ and $S$ are diagonal, $\widetilde X$ is a Cauchy matrix of the form $(\widetilde X)_{i,j}=\widetilde b_i/(\lambda_i+s_j)$.
Several results on the decay of the singular values of Cauchy matrices can be found in the literature.
In particular, the rate of this decay is often bounded from above by quantities that depend on the distance between the $s_i$'s and the {\color{blue} mirrored} $\lambda_j$'s; see, e.g.,~\cite[Section {\color{blue}4.2}]{BecT17},~\cite[Theorem 3.2]{Truhar1}, and~\cite[Eq. (2.34)]{Truhar2}. None of this contribution takes into account the possible clustering of the shifts $s_i$'s.

In the following proposition, we derive a novel result which relates the low-rank approximability of the {\color{blue}solution $ X$} to the relative distance between the shifts $s_i$'s and a ``centroid'' $\mathbf{s}$, and the distance between the shifts and the eigenvalues of $A$.

\begin{proposition}\label{Prop1}
{\color{blue}Let $A=Q\Lambda Q^{-1}$, $\Lambda=\text{diag}(\lambda_1,\ldots,\lambda_n)$, and} assume that there exists a centroid $\mathbf{s}$ such that $\max_{i=1,\ldots,\ell}|s_i-\mathbf{s}|\leq \epsilon$
    and that $\min_{j=1,\ldots,n}|\lambda_j+\mathbf{s}|/\epsilon\gg 1$. Then
    {\color{blue}
    \begin{equation}
        \| X-(A+\mathbf{s}I)^{-1} b\mathbf{1}_\ell^T\|_F\leq c(\epsilon,\mathbf{s},\Lambda)\kappa_F(Q)\| X\|_F,
    \end{equation}}
    where $c(\epsilon,\mathbf{s},\Lambda)=\epsilon/\min_{j=1,\ldots,n}|\lambda_j+\mathbf{s}|\ll 1$ {\color{blue}and $\kappa_F(Q)=\|Q\|_F\|Q^{-1}\|_F$.}
\end{proposition}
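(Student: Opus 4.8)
The plan is to exploit the fact that the Frobenius norm is entrywise, which reduces the whole claim to a scalar estimate applied to every entry of the error matrix. First I would write out the error matrix
$$E := \widetilde X-(\Lambda+\mathbf{s}I)^{-1}\widetilde b\mathbf{1}_\ell^T$$
explicitly. Since the approximating term is the rank-one matrix whose $(i,j)$ entry is $\widetilde b_i/(\lambda_i+\mathbf{s})$, and the Cauchy matrix has entries $(\widetilde X)_{i,j}=\widetilde b_i/(\lambda_i+s_j)$, the $(i,j)$ entry of $E$ is
$$E_{i,j}=\widetilde b_i\left(\frac{1}{\lambda_i+s_j}-\frac{1}{\lambda_i+\mathbf{s}}\right).$$

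The crucial algebraic step is to place the two fractions over a common denominator. A direct computation gives
$$\frac{1}{\lambda_i+s_j}-\frac{1}{\lambda_i+\mathbf{s}}=\frac{\mathbf{s}-s_j}{(\lambda_i+s_j)(\lambda_i+\mathbf{s})},$$
so that $E_{i,j}=(\mathbf{s}-s_j)\,(\widetilde X)_{i,j}/(\lambda_i+\mathbf{s})$, where I recognize the factor $\widetilde b_i/(\lambda_i+s_j)$ as precisely $(\widetilde X)_{i,j}$. This identity is what links $E$ back to $\widetilde X$ and will ultimately produce $\|\widetilde X\|_F$ on the right-hand side. Squaring and summing over all entries then yields
$$\|E\|_F^2=\sum_{i,j}\frac{|\mathbf{s}-s_j|^2}{|\lambda_i+\mathbf{s}|^2}\,|(\widetilde X)_{i,j}|^2.$$

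From here the estimate is routine: I would invoke the two hypotheses, namely $|\mathbf{s}-s_j|\le\epsilon$ uniformly in the shift index $j$ and $|\lambda_i+\mathbf{s}|\ge\min_{k=1,\ldots,n}|\lambda_k+\mathbf{s}|$ uniformly in the eigenvalue index $i$. These bounds pull the constant $\epsilon^2/\min_k|\lambda_k+\mathbf{s}|^2$ out of the double sum and leave exactly $\sum_{i,j}|(\widetilde X)_{i,j}|^2=\|\widetilde X\|_F^2$; taking square roots gives the claimed inequality with $c(\epsilon,\mathbf{s},\Lambda)=\epsilon/\min_k|\lambda_k+\mathbf{s}|$, while the assumption $\min_k|\lambda_k+\mathbf{s}|/\epsilon\gg1$ immediately yields $c\ll1$. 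There is no genuine obstacle in this argument; the entire proof is an entrywise manipulation, and the only point requiring minor care is to keep the eigenvalue row index $i$ distinct from the shift column index $j$ so that each uniform bound is applied along the correct dimension.
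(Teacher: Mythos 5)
Your proof is correct and follows essentially the same route as the paper's: an entrywise bound on the difference $\widetilde b_i/(\lambda_i+\mathbf{s})-\widetilde b_i/(\lambda_i+s_j)$, recognition of the factor $\widetilde b_i/(\lambda_i+s_j)$ as $(\widetilde X)_{i,j}$, and then summing the squared entries and taking the square root to pass to the Frobenius norm. The only difference is presentational, in that you write out the common-denominator identity and the double sum explicitly, which the paper leaves implicit.
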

\begin{proof}
{\color{blue} Let's first consider the transformed equation~\eqref{eq:transformedeq}.}
The $(i,j)$-th entry of $\widetilde X$ is of the form $(\widetilde X)_{i,j}=\widetilde b_i/(\lambda_i+s_j)$. We thus have
$$\left|\frac{\widetilde b_i}{\lambda_i+\mathbf{s}}-(\widetilde X)_{i,j}\right|=\left|\frac{\widetilde b_i}{\lambda_i+\mathbf{s}}-\frac{\widetilde b_i}{\lambda_i+s_j}\right|\leq \frac{|\widetilde b_i|\epsilon}{|\lambda_i+\mathbf{s}|\cdot |\lambda_i+s_j|}\leq c(\epsilon,\mathbf{s},\Lambda)|(\widetilde X)_{i,j}|.
$$
By taking the square root of the summation over the indices $i$ and $j$ of the square of the terms above, {\color{blue} it is then easy to show that
\begin{equation}
        \|\widetilde X-(\Lambda+\mathbf{s}I)^{-1}\widetilde b\mathbf{1}_\ell^T\|_F\leq c(\epsilon,\mathbf{s},\Lambda)\|\widetilde X\|_F.
    \end{equation}
We thus have 
$$    \| X-(A+\mathbf{s}I)^{-1} b\mathbf{1}_\ell^T\|_F\leq
        \|Q\|_F\|\widetilde X-(\Lambda+\mathbf{s}I)^{-1}\widetilde b\mathbf{1}_\ell^T\|_F
        \leq
        c(\epsilon,\mathbf{s},\Lambda)\|Q\|_F\|\widetilde X\|_F,$$
     and the result follows by recalling that $\widetilde X=Q^{-1}X$.   
    }
\end{proof}

The assumptions of Proposition~\ref{Prop1} portray a scenario where the shifts $s_i$'s are much closer to the centroid $\mathbf{s}$ than to the mirrored eigenvalues $-\lambda_j$'s, underlying the importance of having well-conditioned problems. Indeed, having close shifts could not be sufficient to obtain (almost) linearly dependent solutions if the related shifted linear systems are badly conditioned.

Proposition~\ref{Prop1} can be easily generalized to the case where we can identify a certain number $m$ of different clusters of shifts, each of them equipped with its centroid $\mathbf{s}_k$, $k=1,\ldots,m$.
In particular, by applying the result of Proposition~\ref{Prop1} column-wise, cluster by cluster, we can show that $ X$ can be well approximated by a rank-$m$ matrix. 

In the following experiment, we numerically validate the findings of Proposition~\ref{Prop1}.

\begin{experiment}\label{Ex:1}
We generate the matrix $A\in\mathbb{R}^{n\times n}$, $n=200$, as $A=Q\Lambda Q^{-1}$, where $Q\in\mathbb{R}^{n\times n}$ is a matrix with random entries drawn from the Gaussian distribution\footnote{{\color{blue}{\tt rng('default')}}}, {\color{blue} $\kappa_F=\mathcal{O}(10^3)$,} whereas $\Lambda=\text{diag}(-100,\ldots,-1,1,\ldots,100)$. We consider $\ell=300$ complex shifts of the form $s_j=0.5+i \omega_j 10^{-k}$, $k>0$, with $\omega_j$ a random scalar drawn once again from the Gaussian distribution. We can think of the exponent $k$ as an index of the level of clustering of the shifts: larger $k$ yields more clustered $s_j$'s. We select $b\in\mathbb{R}^n$ as a random vector with random entries 
drawn from the Gaussian distribution {\color{blue} and unit norm}.

In Table~\ref{tab:my_label} (left), we report the relative error $\|X-(A+\mathbf{s}I)^{-1}b\mathbf{1}_\ell^T\|_F/\|X\|_F$ along with the ratio $\sigma_2(X)/\sigma_1(X)$ by varying $k$, where $X$ denotes the exact solution to~\eqref{eq:Sylv} computed column-wise by the Matlab solver ``backslash''.
\begin{table}[t]
    \centering
    \begin{tabular}{c|cc}
       $k$  & $\frac{\|X-(A+\mathbf{s}I)^{-1}b\mathbf{1}_\ell^T\|_F}{\|X\|_F}$ &$\frac{\sigma_2(X)}{\sigma_1(X)}$ \\
       \hline
    3     &{\color{blue}1.03e-3}&  {\color{blue}7.06e-4}\\
4& {\color{blue}1.09e-4}&   {\color{blue}7.47e-5}\\
5  & {\color{blue}1.03e-5}&  {\color{blue}7.06e-6}\\
6 &    {\color{blue}1.05e-6}&     {\color{blue}7.19e-7}\\
7 &      {\color{blue}1.06e-7} &   {\color{blue}7.23e-8}\\
    \end{tabular}
    \quad
    \begin{tabular}{c|cc}
       $k$  & $\frac{\|X-(A+\mathbf{s}I)^{-1}b\mathbf{1}_\ell^T\|_F}{\|X\|_F}$ &$\frac{\sigma_2(X)}{\sigma_1(X)}$ \\
       \hline
    3     &    {\color{blue}1.10}&     {\color{blue}1.43e-4}
\\
4&     {\color{blue}5.57}&       {\color{blue}7.01e-5}
\\
5  &     {\color{blue}1.01}&     {\color{blue}1.19e-6}
\\
6 &       {\color{blue}1.44}&        {\color{blue}2.68e-7}
\\
7 &         {\color{blue}1.67}
 &     {\color{blue}2.18e-8}
\\
    \end{tabular}
    
    \caption{Example~\ref{Ex:1}. $A=Q\Lambda Q^{-1}\in\mathbb{R}^{n\times n}$, $n=200$, $Q\in\mathbb{R}^{n\times n}$ random matrix with entries drawn from the Gaussian distribution, {\color{blue}$\kappa_F=\mathcal{O}(10^3)$}, $\Lambda=\text{diag}(-100,\ldots,-1,1,\ldots,100)$. $b\in\mathbb{R}^n$ random vector. We consider two different sets of $\ell=300$ shifts.  Left: $s_j=0.5+i \omega_j 10^{-k}$, $j=1,\ldots,300$. Right: $s_j=1+10^{-10}+i \omega_j 10^{-k}$, $j=1,\ldots,300$. $k$ denotes the level of clustering of the constructed shifts. $X\in\mathbb{C}^{n\times \ell}$ denotes the exact solution computed column-wise by ``backslash''.}    
    \label{tab:my_label}
\end{table}

Note that regardless of $k$, the minimum distance between the shifts and the eigenvalues of $A$ is 0.5 and that the shifts lie within the {\color{blue} field of values} of $A$. {\color{blue}Moreover, $\text{Re}(\lambda+s_j)$ does not have a constant sign for all eigenvalues $\lambda$ of $A$ and shifts $s_j$, but it can be either positive or negative}. Therefore, to the best of our knowledge, state-of-the-art results cannot predict how the low-rank approximability of $X$ depends on the clustering of the shifts, as illustrated by the results in Table~\ref{tab:my_label} (left). This aspect is accounted for by Proposition~\ref{Prop1}. Indeed, from the way we constructed the shifts, we have $\mathbf{s}=0.5$ and $\max_{i=1,\ldots,\ell}|s_i-\mathbf{s}|=\mathcal{O}(10^{-k})$ so that $c(\epsilon,\mathbf{s},\Lambda)=\mathcal{O}(10^{1-k})$.

We repeat the same experiment changing the shifts to $s_j=1+10^{-10}+i \omega_j 10^{-k}$ so that $\mathbf{s}=1+10^{-10}$, $\epsilon=\mathcal{O}(10^{-k})$, and $\min_{j=1,\ldots,n}|\lambda_j+\mathbf{s}|=10^{-10}$. 
In Table~\ref{tab:my_label} (right), we report the results. 
For this problem, Proposition~\ref{Prop1} cannot guarantee the low-rank approximability of $X$. Indeed, we have $c(\epsilon,\mathbf{s},\Lambda)=\mathcal{O}(10^{10-k})$ which is large for the values of $k$ we tested. This is numerically confirmed by the large errors reported in Table~\ref{tab:my_label} (right). Nevertheless, $X$ can still be well-approximated by a rank-1 matrix as indicated by the small values of $\sigma_2(X)/\sigma_1(X)$. This suggests that further theoretical investigation is necessary to identify all the scenarios where we can have a numerically low-rank solution. 

\end{experiment}

{\color{blue}
\begin{remark}
    We mention here that we employ the concept of centroids only for theoretical purposes, to depict scenarios where we do expect the singular values of the exact solution to rapidly decay to zero. While centroids might be employed as poles in the rational Krylov basis construction, this strategy is likely not effective in settings 
    where the results from Proposition~\ref{Prop1} do not apply, cf.
Example~\ref{Ex:1}. Therefore, in section~\ref{Poles selection} we propose a general, greedy pole-selection strategy where the poles $\xi_i$'s in~\eqref{eq:RK} are computed on the fly during the iterative process, as it is customary in rational Krylov subspace methods.
\end{remark}

\begin{remark}
    In \cite{KressnerTobler11} the authors report some results on the numerical linear dependency of solutions to parametrized linear systems of the form $A(\alpha)x(\alpha)=b(\alpha)$
    where $\alpha\in[-1,1]$, and in \cite[Section 6.6]{Rec1} recycling Krylov methods have been tested in this scenario.  In particular, in \cite[Theorem 2.4]{KressnerTobler11} it has been shown how the decay of the singular values of the matrix $[x(\alpha_1),\ldots,x(\alpha_p)]$ is bounded from above by $\max_{\alpha\in\mathcal{E}_\rho}\|A(\alpha)^{-1}\|_2$ where $\mathcal{E}_\rho\subset\mathbb{C}$ denotes the open elliptic
disc with foci $\pm1$ and the sum of the half axes equal to $\rho$. This result can be certainly applied to our context as well. By considering the technical assumption that $s_i\in[-1,1]$ for all $i=1,\ldots,\ell$, \cite[Theorem 2.4]{KressnerTobler11} says that the decay of the singular values of the solution $X$ to \eqref{eq:Sylv} is mainly driven by $\max_{s\in\mathcal{E}_\rho}\|(A+sI)^{-1}\|_2\geq \max_{i=1,\ldots,\ell}\|(A+s_iI)^{-1}\|_2$ underlying the importance of having well-conditioned problems in \eqref{eq:main} but overlooking, once again, the possible clustering of the shifts $s_i$'s.

By building upon \cite{KressnerTobler11}, it might be interesting to study how (and if) a possible clustering of the parameters $\alpha$ impacts the decay of the singular values of $[x(\alpha_1),\ldots,x(\alpha_p)]$ thus extending our findings to general parameter-dependent problems $A(\alpha)x(\alpha)=b(\alpha)$. This fascinating topic is left to be studied elsewhere.
\end{remark}
}
\section{Minimal residual rational Krylov approach}\label{Rational Krylov approach}
Due to the diagonal pattern of $S$, we employ a projection method with only left projection for the solution of the Sylvester equation~\eqref{eq:Sylv}.  In this framework, the sought after solution is of the form {\color{blue}$X_m=V_{m+1}\underline{K}_mY_m$ with the columns of $V_{m+1}=[v_1,\ldots,v_{m+1}]\in\mathbb{C}^{n\times (m+1)}$} forming an orthonormal basis for a suitable subspace and
$Y_m\in\mathbb{C}^{m\times \ell}$ computed by imposing convenient conditions.
In this paper, we propose the columns of $V_{m+1}$ to span the rational Krylov subspace~\eqref{eq:RK}. The basis $V_{m+1}$ of~\eqref{eq:RK} can be computed via the rational Arnoldi scheme~\cite{Ruhe94} or the rational Lanczos method~\cite{ratLanczos} in the case of Hermitian $A$. Notice that {\color{blue} if} $A$ {\color{blue} is} real, {\color{blue} and} $\xi_j\in\mathbb{R}$, for all $j$, $V_{m+1}$ is real as well. A real basis can also be constructed if $A$ is real and the complex poles come in conjugate pairs; see, e.g.,~\cite{realbasis}.
We compute $Y_m=[y_1,\ldots, y_{\ell}]\in\mathbb{C}^{m\times \ell}$ by imposing a minimal residual condition, namely
{
\color{blue}
\begin{align}\label{eq:minimal_residual_condition}
  Y_m&=\argmin_{Y\in\mathbb{C}^{m\times \ell}}\|AV_{m+1}\underline{K}_mY + V_{m+1}\underline{K}_mYS-b\mathbf{1}_\ell^T\|_F^2\notag\\
  &= \argmin_{Y=[y_1,\ldots, y_{\ell}]}\sum_{j=1}^{\ell}\|AV_{m+1}\underline{K}_my_j + s_jV_{m+1}\underline{K}_my_j-b\|_2^2. 
\end{align}
Using the rational Arnoldi relation~\eqref{eq:rationalArnoldi_HK} we can write the residual matrix as follows
\begin{align}\label{eq:Res_expression}
    R_m=&\,AV_{m+1}\underline{K}_m Y_m + V_{m+1}\underline{K}_mY_mS-b\mathbf{1}_\ell^T\notag\\
    =&\,V_{m+1}\left(\underline{H}_m Y_m + \underline{K}_m Y_mS-\beta e_1\mathbf{1}_\ell^T\right),
\end{align}
so that 
\begin{align}\label{eq:res_norm}
\|R_m\|_F^2=&\left\|\underline{H}_m Y_m + \underline{K}_m Y_mS-\beta e_1\mathbf{1}_\ell^T\right\|_F^2,
    \end{align}
since $V_{m+1}$ is unitary. Therefore, the matrix $Y_m$ in~\eqref{eq:minimal_residual_condition} can be computed by solving the $(m+1)\times m$ least squares problem
\begin{equation}
    \displaystyle{Y_m=\argmin_{Y\in\mathbb{C}^{m\times m}}\left\|\underline{H}_m Y + \underline{K}_m YS-\beta e_1\mathbf{1}_\ell^T\right\|_F.
}\end{equation}

In particular, $Y_m=[y_m^{(1)},\ldots,y_m^{(\ell)}]$ can be computed column-wise by solving $\ell$ small-dimensional least squares problems as follows
\begin{equation}\label{eq:Y_columnwise}
y_m^{(j)}=\argmin_{y\in\mathbb{C}^{m}}\left\|(\underline{H}_m + s_j \underline{K}_m) y-\beta e_1\right\|.
\end{equation}
Using the QR decomposition of the coefficient matrix in the least-squares problem \eqref{eq:Y_columnwise}, we have 
$$
[Q_m^{(j)},P_m^{(j)}]\begin{bmatrix}
G_m^{(j)}\\
0\\
\end{bmatrix}=\underline{H}_m+s_j\underline{K}_m,
$$
where $Q_m^{(j)}\in\mathbb{C}^{(m+1)\times m}$, $P_m^{(j)}\in\mathbb{C}^{(m+1)}$, and $G_m^{(j)}\in\mathbb{C}^{m\times m}$ upper triangular. Therefore,
$$y_m^{(j)}=\beta (G_m^{(j)})^{-1}((Q_m^{(j)})^{\color{blue}*}e_1),$$
and the norm of the residual vector $r_m^{(j)}=(A+s_jI_n)V_my_m^{(j)}-b$ can be cheaply computed as
\begin{equation}\label{eq:resnormcomputation}
   \| r_m^{(j)}\|=    
\|(\underline{H}_m+s_j\underline{K}_m)y_m^{(j)}-\beta e_1\|=\beta\|(P_m^{(j)})^{\color{blue}*}e_1\|.
\end{equation}

In Algorithm~\ref{alg:MRRat}, we report the overall minimal residual rational Krylov subspace scheme for the solution of~\eqref{eq:Sylv}. 
}

{
\color{blue}
\begin{algorithm}[t]
\begin{algorithmic}[1]
\smallskip
\Statex \textbf{Input:} matrix $A\in{\mathbb{C}}^{n\times n}$, shifts $s_i\in\mathbb{C}$, right-hand side $b\in\mathbb{R}^n$, maximum number of iterations $\texttt{maxit}$, tolerance $\texttt{tol}$.
\Statex \textbf{Output:} $V_{m+1}\in\mathbb{C}^{n\times (m+1)}$, $Z_m\in\mathbb{C}^{(m+1)\times \ell}$ such that $\|(A+s_iI_n)V_{m+1}Z_me_i-b\|\leq \|b\| \cdot \texttt{tol}$
\smallskip
\State Set $V_1=v_1=b/\|b\|$
\State Set $\Sigma=\{1,\ldots,\ell\}$, $\Sigma_C=\emptyset$, and randomly select an initial pole $\xi_1$.
\For{$m=1,\ldots,\mathtt{maxit}$}
\State Solve $(A+\xi_{m}I_n)w=v_m$\label{alg:solsystem}
\label{alg:ortholine} 
\State Set up $\underline{K}_m$ and $\underline{H}_m$ so that $A V_{m+1}\underline{K}_m =V_{m+1} \underline{H}_{m}$
\For{$j\in\Sigma\setminus\Sigma_C$}
\State Compute $y_m^{(j)}$ as in~\eqref{eq:Y_columnwise} and set $z_m^{(j)} = \underline{K}_my_m^{(j)}$
\State Compute the residual norm $\|r_m^{(j)}\|$ as in~\eqref{eq:resnormcomputation}
\If{$\|r_m^{(j)}\|<\|b\|\cdot\mathtt{tol}$}
\State Set $\Sigma_C=\Sigma_C\cup\{j\}$ and $Z_me_j=z_m^{(j)}$
\EndIf
\If{$\Sigma\setminus\Sigma_C = \emptyset$}
\State Go to line~\ref{alg:lastline}
\EndIf
\EndFor
\State Compute next pole $\xi_{m+1}$\label{alg:linecomputepole}
\EndFor
\State Return $V_{m+1}$ and $Z_m$ by possibly padding the latter with zeros\label{alg:lastline}
\end{algorithmic}    \caption{Minimal residual rational Krylov subspace method for~\eqref{eq:Sylv}. \label{alg:MRRat} }

\end{algorithm}
Note that since, $\text{Range}(V_{i+1}\underline{K}_i) \subseteq \text{Range}(V_{j+1}\underline{K}_j)$ for $i\leq j$, the
appealing residual minimization property of our routine guarantees that whenever the residual norm related to the $j$-th linear system in~\eqref{eq:main}, namely $\|r_m^{(j)}\|$, is smaller than the prescribed accuracy target, it remains so also when the subspace is expanded as $\|r_{m+k}^{(j)}\|\leq\|r_{m}^{(j)}\|$ for any $k\geq 0$. 

\begin{remark}\label{remark_Ypadded}
    The monotone decrease of the residual norm can be exploited in computation. In particular, if at iteration $m$ we detect a sufficiently small residual norm for the $j$-th linear system, we stop solving the projected problem related to that linear system in the following iterations. By doing so, the number of small-dimensional least squares problems~\eqref{eq:Y_columnwise} can potentially decrease along the rational Krylov iterations. Note that this strategy requires careful handling of the matrix $Y_m$. In particular, if at the $m$-th iteration $\|r_m^{(j)}\|$ is sufficiently small, then we set $Y_me_j=y_m^{(j)}$. However, it is very unlikely that at this point all the linear systems have converged, namely $\|r_m^{(i)}\|$ is sufficiently small for all $i=1,\ldots,\ell$. We thus proceed with expanding the space but we no longer solve the least squares problem related to the $j$-th linear system. Therefore, for any $k\geq0$ we set $Y_{m+k}e_j=[y_m^{(j)};0]\in\mathbb{C}^{m+k}$ where, with abuse of notation, $0$ denotes a $k$-dimensional vector of all zeros.
\end{remark}}

\begin{remark} {\color{blue}For the sake of simplicity,}
Algorithm~\ref{alg:MRRat} is given by assuming that all the shifts $s_i$ are available. However, working in a framework where new shifts are provided every now and then does not lead to any particular difficulties. Indeed, the construction of the rational Krylov subspace is independent of the shifts $s_i$'s. Whenever a new shift comes at hand, one can simply assemble the related projected least squares problem~\eqref{eq:Y_columnwise} with the rational Krylov quantities already available. {\color{blue} This means that we can fully capitalize on the computational efforts done so far also for the solution of linear systems with newly acquired shifts and the solution process does not need to start from scratch all over again.}  
\end{remark}

\begin{table}[t]
    \centering
    {\color{blue}
    \begin{tabular}{r|l}
         Linear systems solution & $\mathcal{O}(m\cdot\text{nnz}(A))$ \\
         Orthogonalization & $\mathcal{O}(nm^2)$ \\
         Solution of~\eqref{eq:Y_columnwise} & $\mathcal{O}(\ell m^4)$ \\
          
    \end{tabular}
    \caption{Main asymptotic costs (in flops) of performing $m$ iterations of Algorithm~\ref{alg:MRRat}. }
    \label{tab:costs}
    }
\end{table}

{\color{blue}
We conclude this section by reporting in Table~\ref{tab:costs} the main asymptotic costs of Algorithm~\ref{alg:MRRat}, after $m$ iterations. We remind the reader that, at iteration $k$, the cost per iteration of the orthogonalization step and the solution of the projected problems~\eqref{eq:Y_columnwise} amounts to $\mathcal{O}(nk)$ and $\mathcal{O}(\ell k^3)$ flops, respectively, so that after $m$ iterations we end up with the values reported in Table~\ref{tab:costs}. These values highlight the importance of constructing a very meaningful approximation space, able to converge in a few iterations. Otherwise, Algorithm~\ref{alg:MRRat} would not be competitive with the naive solution of the $\ell$ linear systems in~\eqref{eq:main}, whose cost is $\mathcal{O}(\ell\cdot\text{nnz}(A))$ flops. Similarly, the strategy mentioned in Remark~\ref{remark_Ypadded} can significantly help reducing the cost of the solution of the projected problems as not $\ell$ least squares problems need to be solved at each iteration. The cost reported in Table~\ref{tab:costs} is the worst-case scenario, where we have to solve all the $\ell$ projected problems in~\eqref{eq:Y_columnwise} until the $m$-th iteration.
\begin{remark}
    Note that the QR decomposition of $\underline{H}_m+s_j\underline{K}_m$ for each $j=1,\ldots,\ell$, can be updated at each iteration by using a Givens rotations approach similar to how the latter is employed in GMRES~\cite{Schultz1986} and the residual norm computation can be performed almost for free reducing the cost of \eqref{eq:Y_columnwise} to $\mathcal{O}(\ell m^2)$ when the solution $Y_m$ is only computed once the residual is below the required threshold. However, this reduction in cost comes with the extra memory requirement to store additional  $\ell$ matrices of size $(m+1)\times m$, the $R$ factors; $\ell$ vectors of length $m$, the residuals; and $\ell$ sets of Givens rotations each having $m$ rotations. 
\end{remark}

}
\subsection{Pole selection}\label{Poles selection}
As already mentioned, the effectiveness of the rational Krylov subspace~\eqref{eq:RK} is strongly related to the quality of the adopted poles $\bm{\xi}_{m}=(\xi_1,\ldots,\xi_m)^T\in\mathbb{C}^{m}$ for the problem at hand. Available pole-selection strategies often rely on the properties of the projected matrices $V_{m+1}^*AV_{m+1}$ to compute the next pole $\xi_{m+1}$; see, e.g.,~\cite{Poles1,Poles2}.

{\color{blue}
The strategy we propose for the solution of sequences of shifted linear systems does not involve $V_{m+1}^*AV_{m+1}$ solely. It is rather based on the following proposition.
\begin{proposition}\label{Prop:poles}
Let $s$ be a shift selected to be a pole during the construction of $V_{m+1}$, the rational Krylov basis matrix, then $x = (A+sI)^{-1}b = V_{m+1}\underline{K}_m y$, for some $y\in\mathbb{C}^m$.
\end{proposition}
\begin{proof}
Let $s$ be a given shift. The rational Arnoldi relation holding at iteration $m$ in Algorithm~\ref{alg:MRRat} is given as
$$AV_{m+1}\underline{K}_m = V_{m+1}\begin{bmatrix}
    I_m - K_m \Xi_m\\
    -\xi_{m} k_{m+1,m}e_j^T
\end{bmatrix},$$
where $\Xi_m=\text{diag}(\xi_1,\ldots,\xi_{m})$.
We have
\begin{equation}
\label{eq:rational_arnoldi_shift_and_invert_relation}
    V_{m+1}\underline{K}_m = (A+sI)^{-1}V_{m+1}\underline{T}_m,
\end{equation}
where $\underline{T}_m = \begin{bmatrix}
    I_m - K_m (\Xi_m-sI)\\
    -(\xi_{m}-s) k_{m+1,m}e_j^T
\end{bmatrix}$.
Therefore, $(A+sI)^{-1}b = V_{m+1}\underline{K}_m y$ for some $y\in\C^m$ if and only if $e_1 = \underline{T}_my$.
Consider the last selected pole $s=\xi_{m}$. Unless a happy-breakdown occurs, the rank of the left hand side in~\eqref{eq:rational_arnoldi_shift_and_invert_relation} is $m$. Hence, we have $\underline{T}_m = \begin{bmatrix}
    T_m\\0
\end{bmatrix}$ and $T_m = I_m - K_m(\Xi_m-\xi_{m}I)$ is full rank. Therefore,
\begin{align*}
    (A+\xi_mI)^{-1}b &= (A+\xi_mI)^{-1}V_{m}e_1,\\
    &= (A+\xi_mI)^{-1}V_{m}T_m T_m^{-1}e_1,\\
    &= (A+\xi_mI)^{-1}V_{m+1}\underline{T}_m T_m^{-1}e_1,\\
    &= V_{m+1}\underline{K}_{m} T_m^{-1}e_1.    
\end{align*}

For other poles, $\xi_j$, for $j < m$, notice that $\text{Range}(V_{j+1}\underline{K}_j)\subseteq \text{Range}(V_{m+1}\underline{K}_m)$.
Combining the last two arguments proves the proposition.
\end{proof}
}

Proposition~\ref{Prop:poles} implies that if we choose the new pole $\xi_{m+1}$ equal to one of the shifts $s_i$ in~\eqref{eq:main}, then $r_{m+1+k}^{(i)}=0$ for any $k\geq0$. 
By recalling the underlying low-rank assumption on the solution $X$ to~\eqref{eq:Sylv}, we {\color{blue}expect small norms for the residual vectors  corresponding to the solutions that are approximately linearly dependent with solutions that have been included in the search space}.
This argument suggests the following pole selection strategy: {\color{blue} at the beginning of the iterative method, set $\xi_1$ to some randomly selected shift from the set of available shifts. For
$m\geq 1$, in line~\ref{alg:linecomputepole} of Algorithm~\ref{alg:MRRat} perform the following steps:
$$
\widehat j=\argmax_{j\in\Sigma\setminus\Sigma_C}\|r_m^{(j)}\|,\quad\text{and}\quad
\xi_{m+1}=s_{\widehat j}.
$$
We would like to mention that our pole-selection strategy resembles that of some greedy approaches used in reduced basis methods in the context of model order reduction techniques; see, e.g.,~\cite{JelichBaydounVoigtMarburg2021, QuarteroniRozzaManzoni2011, BaydounVoigtJelichMarburg2020}. However, to the best of our knowledge, it has never been proposed before in our setting.
}

\begin{remark}
{\color{blue}It is easy to handle new shifts with the pole-selection strategy above.} In particular, once we solve the projected least squares problems related to these new shifts, we {\color{blue} pick the new pole to be the shift corresponding to the largest residual norm from the previous and new sets of shifts}.
\end{remark}

\subsection{A fully iterative variant}\label{A fully iterative variant}
The construction of rational Krylov subspaces requires the solution of shifted linear systems of the form 
\begin{equation}\label{eq:shiftedsystemRK}
(A+\xi_{m+1}I)w=v_m,  
\end{equation}
at each iteration. To this end, sparse direct solvers can be employed~\cite{ScottTuma2023}. However, there are scenarios where an iterative solution may be more suitable. We consider the adoption of a (preconditioned) Krylov method, such as FOM~\cite{Saad2003} or GMRES~\cite{Schultz1986}, for this task, but, in principle, other iterative solvers like multigrid methods can also be employed.

An easy implementation would only see the adoption of the user's favorite Krylov method in line~\ref{alg:solsystem} of Algorithm~\ref{alg:MRRat}. Yet, more sophisticated options would amount, e.g., to the adoption of a space-augmentation approach where we first project~\eqref{eq:shiftedsystemRK} onto the already available space 
$\mathbf{K}_{m+1}(A,b,\bm{\xi}_m)$ so as to capitalize on the computational efforts made for its construction. If this space does not contain a sufficiently accurate approximation to the solution to~\eqref{eq:shiftedsystemRK}, then we can expand this space with some directions belonging to $\mathbf{K}_k(A,v_m)$. Even though this strategy may look appealing, it is unlikely to be successful. Indeed, by keeping an eye on how we selected the pole $\xi_{m+1}$ (cf. section~\ref{Poles selection}), the residual $(A+\xi_{m+1}I)V_my-b$ has large norm {\color{blue} since $\|(A+\xi_{m+1}I)V_my-b\|\ge \|(A+\xi_{m+1}I)V_{m+1}\underline{K}_my-b\|$}. It is thus likely that the same happens for the linear system~\eqref{eq:shiftedsystemRK}.

Due to our pole selection strategy,~\eqref{eq:shiftedsystemRK} inherits all the difficulties related to~\eqref{eq:main}. This means that any recycling approach aimed at utilizing the polynomial Krylov subspace used for, e.g., the previous linear systems $(A+\xi_{k+1}I)w=v_k$, $k=1,\ldots,m-1$, may result in a poor solution scheme.

From our numerical experience, the most straightforward option works the best. In particular, solving~\eqref{eq:shiftedsystemRK} from scratch by a preconditioned Krylov technique turned out to be the most competitive alternative; see section~\ref{sec:numerical_experiments}.




\section{The case of multiple right-hand sides}\label{sec:multiple_rhs}
For the sake of clarity in the derivation of the method, in the previous sections we considered the right-hand sides $b_i$ in~\eqref{eq:main} to be {\color{blue} the same}. This means that the right-hand side in the matrix equation formulation of the problem, namely equation~\eqref{eq:Sylv}, has rank one. In this section, we show that our procedure does not change significantly by dropping this assumption, given that the right-hand side of~\eqref{eq:Sylv} remains low-rank. We identify a scenario which is rather common in practice. In particular, we assume that we have a handful of different right-hand sides, say $k$, and, for all of them, we want to solve the shifted linear system in~\eqref{eq:main} for all the $\ell$ shifts, with $\ell$ much larger than $k$. We are thus interested in problems of the form 
$$(A+s_iI_n)x^{(i,j)}=b^{(j)},\quad i=1,\ldots,\ell,\;j=1,\ldots,k.$$
The matrix equation formulation of this problem can be written as
\begin{equation}\label{eq:Sylv2}
    AX+X(S\otimes I_k)=B(\mathbf{1}_\ell^T\otimes I_k),
\end{equation}
{\color{blue}
where $X=[x^{(1,1)},\ldots,x^{(1,k)},x^{(2,1)},\ldots,x^{(\ell,k)}]\in\mathbb{C}^{n\times k\ell}$, $B=[b^{(1)},\ldots,b^{(k)}]\in\mathbb{R}^{n\times k}$, and $\otimes$ denotes the Kronecker product. We still look for an approximate solution of the form $X_m=V_{m+1}\underline{K}_mY_m\in\mathbb{C}^{n\times k\ell }$ where now the orthonormal columns of $V_{m+1}=[\mathcal{V}_1,\ldots,\mathcal{V}_{m+1}]\in{\color{blue}\mathbb{C}}^{n\times (m+1)k}$, $\mathcal{V}_i\in{\color{blue}\mathbb{C}}^{n\times k}$ for all $i=1,\ldots,m+1$, are the first $(m+1)k$ columns of the basis matrix that span the \emph{block} rational Krylov subspace $\mathbf{K}_{m+1}(A,B,\bm{\xi}_{m})=\text{Range}([B,(A+\xi_1I)^{-1}B,\ldots,\prod_{i=1}^{m}(A+\xi_iI)^{-1}B])$. For this space, the block counterpart of~\eqref{eq:rationalArnoldi_HK} holds true. In particular, we can write
\begin{equation}\label{eq:blockrationalArnoldi}
    AV_{m+1}\underline{K}_m=V_{m+1}\underline{H}_m, 
\end{equation}
where $\underline{K}_m, \underline{H}_m \in{\mathbb{C}}^{(m+1)k\times mk}$ are block Hessenberg matrices that result from the block orthogonalization step; see, e.g.,~\cite{CasR24} for further details.
As before, we are going to exploit~\eqref{eq:blockrationalArnoldi} for imposing a minimal residual condition to compute the matrix $Y_m=[\mathcal{Y}^{(1)}_m,\ldots,\mathcal{Y}_m^{(\ell)}]\in\mathbb{C}^{mk\times k\ell}$, $\mathcal{Y}_m^{(i)}\in\mathbb{C}^{mk\times k}$ for all $i=1\,\ldots,\ell$. Following similar steps as in section~\ref{Rational Krylov approach}, we have
{
$$R_m=V_{m+1}\left(\underline{H}_mY_m +\underline{K}_mY_m(S\otimes I_k)-E_1\bm{\beta}(\mathbf{1}_\ell^T\otimes I_k)\right),$$}
    where $E_1=e_1\otimes I_k$ and $\bm{\beta}\in\mathbb{R}^{k\times k}$ is such that $B=\mathcal{V}_1\bm{\beta}$. {Since} $V_{m+1}$ {is unitary}, we can compute $Y_m=[\mathcal{Y}_m^{(1)},\ldots,\mathcal{Y}_m^{(\ell)}]$ block-wise by solving 
    $$\mathcal{Y}_m^{(i)}=\argmin_{\mathcal{Y}\in\mathbb{C}^{mk\times k}}\left\|(\underline{H}_m+s_i\underline{K}_m)\mathcal{Y}-E_1\bm{\beta}\right\|_F, \quad i=1,\ldots,\ell. $$

    Once again, this small-dimensional least squares problems can be solved by computing the QR factorization of the coefficient matrix. In particular, if 
$$[Q_m^{(i)},P_m^{(i)}]\begin{bmatrix}
    G_m^{(i)}\\
    0\\
\end{bmatrix}=(\underline{H}_m+s_i\underline{K}_m),$$
    with $Q_m^{(i)}\in\mathbb{C}^{(m+1)k\times mk}$, $P_m^{(i)}\in\mathbb{C}^{(m+1)k\times k}$, and $G_m^{(i)}\in\mathbb{C}^{mk\times mk}$, then 
    $$ \mathcal{Y}_m^{(i)}=(G_m^{(i)})^{-1}((Q_m^{(i)})^*E_1\bm{\beta}),$$
    and
    {
    $$\|\mathcal{R}_m^{(i)}\|_F=\left\|(\underline{H}_m+s_i\underline{K}_m)\mathcal{Y}_m^{(i)}-E_1\bm{\beta}\right\|_F=\|(P_m^{(i)})^*E_1\bm{\beta}\|_F.$$}

    It is easy to show that $R_m=[\mathcal{R}_m^{(1)},\ldots,\mathcal{R}_m^{(\ell)}]$ so that $\|R_m\|_F^2=\sum_{i=1}^\ell\|\mathcal{R}_m^{(i)}\|_F^2$, and we can thus apply the same pole-selection strategy presented in section~\ref{Poles selection}. In particular, we select the next pole $\xi_{m+1}$ for the block rational Krylov basis construction as the shift $s_i$ related to $\max_i\|\mathcal{R}_m^{(i)}\|_F$. Moreover, we can stop solving for $\mathcal{Y}_m^{(i)}$ as soon as we detect a sufficiently small $\|\mathcal{R}_m^{(i)}\|_F$.

    \begin{remark}
The strategy presented above can also be adopted for problems~\eqref{eq:main} where we have $\ell$ right-hand sides $b^{(i)}$ that change with the shift $s_i$. In this case we have to assume that the matrix $B=[b^{(1)},\ldots,b^{(\ell)}]$ admits a low-rank representation of the form $B=B_1B_2^T$ with $B_1\in\mathbb{R}^{n\times k}$, $B_2\in\mathbb{R}^{\ell\times k}$ for small $k$. If this is the case, the construction of the block rational Krylov subspace $\mathbf{K}_{m+1}(A,B_1,\bm{\xi}_{m})$ can be carried out. However, the presence of the matrix $B_2$ must be considered in the computation of the $Y_m$ factor of the approximate solution $X_m=V_{m+1}\underline{K}_mY_m$. In particular, $Y_m=[y_m^{(1)},\ldots,y_m^{(\ell)}]\in\mathbb{C}^{mk\times \ell}$ can be computed column-wise by solving
$$
y_m^{(i)}=\argmin_{y\in\mathbb{C}^{mk}}\left\|(\underline{H}_m+s_i\underline{K}_m)y-E_1\bm{\beta}B_2^Te_i\right\|.
$$
\end{remark}
}

\section{Numerical experiments}\label{sec:numerical_experiments}
In this section, we present several numerical experiments illustrating the potential of our minimal residual rational Krylov subspace method (labeled MR-RKSM in the following) in solving long sequences of shifted linear systems.
We compare our solver with two state-of-the-art schemes. 
The first one is the extended Krylov subspace method presented in~\cite{Simoncini2010}\footnote{The implementation of this method has been obtained by slighlty modifying the function {\tt kpik} designed for Lyapunov equations~\cite{Sim07} and available at {\tt https://www.dm.unibo.it/\textasciitilde simoncin/software.html}}. Similarly to our solver, this scheme relies on the matrix equation formulation~\eqref{eq:Sylv} of the problem~\eqref{eq:main} and it solves it by left projection. Compared to our proposed method, there are two main differences: (i) the adopted approximation space is the extended Krylov subspace~\eqref{eq:EK} and (ii) a Galerkin condition is imposed to compute the solution. In the following we refer to this scheme as G-EKSM.
The second one is restarted FOM~\cite{FOM}. We preferred this method over restarted GMRES~\cite{RestartedGMRES} as we will consider difficult problems with nonsymmetric matrices $A$ and possibly complex shifts $s_j$ where restarting GMRES may not be possible.
This is not an issue for FOM where the residuals are always collinear by construction and restarting is always an option. In the following, we will denote restarted FOM as FOM(100) since we will restart it every 100 iterations, if necessary. Moreover, we set the maximum number of restarting cycles equal to 10.

Unless stated otherwise, the stopping criterion of all three methods is based on the magnitude of the relative residual norm $\max_{i=1,\ldots,\ell}\|b-(A+s_iI)x^{(i)}\|/\|b\|$ for which we set the threshold $\mathtt{tol}=10^{-8}$. Moreover, all the (shifted) linear systems involved in the construction of the rational and extended Krylov subspaces are solved by means of the ``backslash'' matlab operator, unless we explicitly report a different solution scheme.

All the experiments reported in this paper have been run using Matlab (version 2024b) on a machine
with a 1.2GHz Intel quad-core i7 processor with 16GB RAM and an Ubuntu 20.04.2
LTS operating system. {\color{blue} The Matlab code for reproducing the experiments that follow is available at {\tt https://github.com/palittaUniBO/MR\_RKSM/tree/main}.}

We would like to mention that during our numerical testing, we examined the performance of the GCRO-DR method proposed in~\cite{Parks2006}; see also~\cite{Parks2016a,Parks2016} for a Matlab implementation and a thorough discussion on certain computational aspects of this algorithm.
This method is designed for the solution of general parametrized linear systems. Even though~\eqref{eq:main} can be seen as an instance of this large class of problems, the GCRO-DR implementation available in~\cite{Parks2016a} is not well-suited for a scenario where all the shifts are given. Indeed, in this implementation one is forced to solve the linear systems in~\eqref{eq:main} one at a time, in a sequential manner. This drawback is overcome in the implementation~\cite{soodhalter_2016_56157}. However, this routine builds a block polynomial Krylov subspace for~\eqref{eq:Sylv} with the whole $b\mathbf 1_\ell^T$ as initial block, ignoring its low rank. This issue led to Out-of-Memory errors in our experiments as we always use large values of $\ell$. For these reasons, no results achieved by GCRO-DR are reported.

\begin{experiment}\label{ex1}
    In the first example, we consider the matrix $A\in\mathbb{R}^{n^d\times n^d}$ coming from the finite difference discretization of the following $d$-dimensional convection-diffusion operator
    $$\mathcal{L}(u)=-\nu\Delta u+w\nabla u,$$
    on $[0,1]^d$ with zero Dirchlet boundary conditions. In the expression above, $\nu>0$ is the viscosity parameter and $w$ is the convection vector. The vector $b\in\mathbb{R}^{n^d}$ in~\eqref{eq:main} has random entries drawn from the normal distribution\footnote{{\color{blue}{\tt rng('default')}}} {\color{blue}and unit norm}.

We start by setting $d=2$, $n=100$, so that $A$ is $10\,000\times 10\,000$, $\nu=0.5$, and $w=(3y(1-x^2),-2x(1-y^2))$. 
We consider three sets of $\ell=1\,000$ shifts, each having an increasing level of difficulty. The first set is composed of only real shifts ({\tt real}) chosen as $\ell$ logarithmically spaced points between $-10^{6}$ and $-10^{-6}$. The second set is given by complex shifts coming in conjugate pairs ({\tt complex - conjugate pairs}). In particular, we pick $\ell/2$ logarithmically spaced points $\theta_j$ between $-10^{6}$ and $-10^{-6}$ and consider the set of $\ell$ shifts given by all the pairs $\rm i\cdot\theta_j$, $-\rm i\cdot\theta_j$ where $\rm i$ denotes the imaginary unit. The construction of the third set where we have complex shifts with no conjugate pairs ({\tt complex - no conjugate pairs}) is a little more involved. In particular, we select $\ell$ shifts such that $s_j=c+\rho(\cos(\theta_j)+v\rm{i} \sin(\theta_j))$, where $\theta_j=2\pi j/\ell$, $c=-223.81+5\rm i$, $\rho=500$, and $v=1$, for $j=1,\ldots,\ell$.

\begin{table}[t!]
    \centering
    \begin{tabular}{r|rrr}
    & \multicolumn{3}{c}{shifts: {\tt real}} \\
    & Its. (Cycles) & Rank($X_m$) & Time (s) \\
    MR-RKSM & {\color{blue}21} (-) & {\color{blue}22} & {\color{blue}0.45}\\
    G-EKSM & 24 (-)& 48 & {\color{blue}0.59}\\
    FOM(100)& 453 (5)& - & {\color{blue}4.10}\\
    \hline
    & \multicolumn{3}{c}{shifts: {\tt complex - conjugate pairs}} \\
    & Its. (Cycles) & Rank($X_m$) & Time (s) \\
    MR-RKSM & 36 (-) & {\color{blue}37} & {\color{blue}1.33}\\
    G-EKSM & 32 (-)& 64 & {\color{blue}1.67}\\
    FOM(100)& 453 (5)& - & {\color{blue}5.58}\\
    \hline
    & \multicolumn{3}{c}{shifts: {\tt complex - no conjugate pairs}} \\
    & Its. (Cycles) & Rank($X_m$) & Time (s) \\
    MR-RKSM & {\color{blue}37} (-)& {\color{blue}38} &  {\color{blue}1.63}\\
    G-EKSM & * & * & *\\
    FOM(100)& * & * &*\\
    \end{tabular}
    \caption{Example~\ref{ex1} (2D). Performance of the different methods by varying the nature of the $\ell=1\,000$ shifts $s_j$ in~\eqref{eq:main}. The reported timings are in seconds. ``*'' means no convergence within the prescribed maximum number of iterations (and restarting cycles).}
    \label{Ex1:tab1}
\end{table}

In Table~\ref{Ex1:tab1}, we collect the results achieved by MR-RKSM, G-EKSM, and FOM(100). In particular, we report the overall number of iterations, the rank of the computed solution, and the running time in seconds. For FOM(100) we also report the number of restarting cycles performed by the method. When we report a number of cycles equal to {\tt cycle} we mean that FOM(100) stopped while performing the {\tt cycle}-th restart. Note that MR-RKSM and G-EKSM do not perform any restart. Moreover, we adopt $\text{Rank}(X_m)$ as an indicator of the memory usage of the different methods. In particular, MR-RKSM and G-EKSM represent the solution in low-rank format by allocating $\text{Rank}(X_m)$ vectors of length $n^2$ and other $\text{Rank}(X_m)$ {\color{blue}vectors} of length $\ell$. Note that FOM(100) stores all the $\ell$ solutions to~\eqref{eq:main} as this is necessary to update the current approximation when restarting is applied. This means that FOM(100) is not able to take advantage of the low rank of $X=[x^{(1)},\ldots,x^{(\ell)}]$ to cut its memory demand.

From the results in Table~\ref{Ex1:tab1} we can see that all three methods perform rather well for the sets of shifts {\tt real} and {\tt complex - conjugate pairs}, with MR-RKSM being the fastest and the one with the lowest memory requirements. It is with the shifts {\tt complex - no conjugate pairs} that the performance of MR-RKSM turns out to be outstanding. Indeed, this is the only method able to converge at all. Moreover, convergence is achieved in a small number of iterations and a rather fast running time. G-EKSM and FOM(100) do not converge in 100 iterations and 10 restarting cycles, respectively. In particular, the relative residual norm attained by G-EKSM exhibits a very oscillating trend in the first 50 iterations and then starts to slowly decrease, settling at $\mathcal{O}(10^{-4})$ at the 100th iteration. We remind the reader that, at this point, the rank of the approximate solution computed by G-EKSM is already 200, much larger than the one achieved by MR-RKSM.
The FOM(100) relative residual norm shows oscillating behavior throughout all iterations and cycles with no significant decrease.
This is clearly visible by looking at Figure~\ref{Ex1:fig1} where we depict the convergence history of the three methods. {\color{blue} Even though we do not have a full understanding of this phenomenon, the superiority of the rational Krylov subspace over its extended and polynomial counterparts when dealing with shifted linear systems with complex shifts was already documented in \cite[Section 3.1]{Poles2} in the context of transfer function approximation. Similarly, it is well-known that polynomial Krylov subspace methods struggle to deal with problems stemming from the discretization of Helmholtz equations~\cite{Ernst2012}; a problem related to the solution of shifted linear systems. In any case, further study on the convergence properties of our scheme is necessary to get a complete picture.}

\begin{figure}[t!]
    \centering
    \includegraphics[scale=0.7]{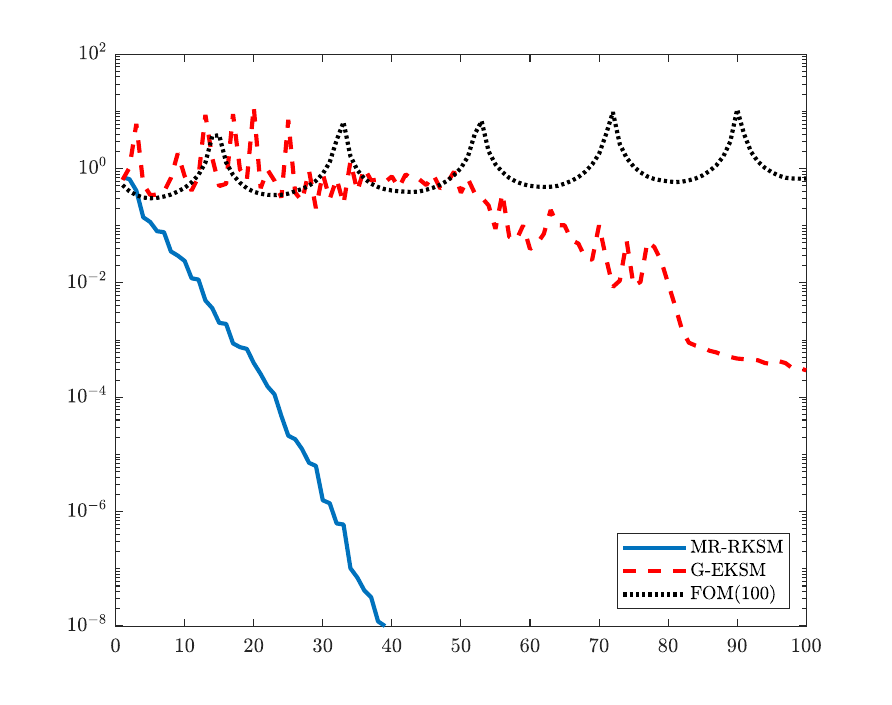}
    \caption{{\color{blue}Example~\ref{ex1} (2D) - shifts: {\tt complex - no conjugate pairs}}. Relative residual norm achieved by MR-RKSM (blue, solid line), G-EKSM (red, dashed line), and FOM(100) (black, dotted line). For FOM(100) we report the relative residual norm of the first 100 iterations, namely during the first cycle. A similar behavior is also observed in the following restarting cycles. }
    \label{Ex1:fig1}
\end{figure}

We now construct a different $A$ by setting $d=3$, $n=50$, so that $A$ is now $125\,000\times 125\,000$, {\color{blue}$\nu=1$}, and $w=(x\cos(x),y\sin(y),\exp(z^2-1))$. The shift sets are as before. It is well-known that iterative solvers work better than their direct counterparts when dealing with linear systems stemming from the discretization of 3D problems. Therefore, we now consider implementations of MR-RKSM and G-EKSM where the linear systems involved in the basis construction are solved by GMRES(50)~\cite{Schultz1986}, where we allow for 100 restarting cycles at most, and preconditioned by an incomplete LU factorization with no fill-in. 
{\color{blue} GMRES(50) is stopped as soon as the computed relative residual norm drops below $10^{-9}$, namely one order of magnitude smaller than the outer tolerance.} 
{\color{blue} Notice, however, 
 that this inner threshold may be relaxed, i.e., increased, as the MR-RKSM iterations proceed by adapting the inexact rational Krylov approach presented in \cite{inexactRational} for the solution of Lyapunov equations. While this approach would make the results in Proposition~\ref{Prop:poles} not holding anymore, the computed rational Krylov space may still be a good approximation space.

We report the results for a fixed inner tolerance in Table~\ref{Ex1:tab2}.}

\begin{table}[t!]
    \centering
    \begin{tabular}{r|rrr}
    & \multicolumn{3}{c}{shifts: {\tt real}} \\
    & Its. (Cycles) & Rank($X_m$) & Time (s) \\
    MR-RKSM & {\color{blue}19} (-) & 20 & {\color{blue}4.62}\\
    G-EKSM & 20 (-)& 40 & {\color{blue}15.74}\\
    FOM(100)& {\color{blue}230} (3)& - & {\color{blue}8.93}\\
    \hline
    & \multicolumn{3}{c}{shifts: {\tt complex - conjugate pairs}} \\
    & Its. (Cycles) & Rank($X_m$) & Time (s) \\
    MR-RKSM & {\color{blue}30} (-) & {\color{blue}31} & {\color{blue}18.83}\\
    G-EKSM & 27 (-)& 54 & {\color{blue}23.55}\\
    FOM(100)& {\color{blue}230} (3)& - & {\color{blue}11.20}\\
    \hline
    & \multicolumn{3}{c}{shifts: {\tt complex - no conjugate pairs}} \\
    & Its. (Cycles) & Rank($X_m$) & Time (s) \\
    MR-RKSM & {\color{blue}34} (-)& {\color{blue}35} & {\color{blue}527.85}\\
    G-EKSM & * & * & *\\
    FOM(100)& * & * &*\\
    \end{tabular}
    \caption{Example~\ref{ex1} (3D). Performance of the different methods by varying the nature of the $\ell=1\,000$ shifts $s_j$ in~\eqref{eq:main}. The reported timings are in seconds. ``*'' means no convergence within the prescribed maximum number of iterations (and restarting cycles).}
    \label{Ex1:tab2}
\end{table}

The iterative solution of the inner (shifted) linear systems makes the computational gap between the matrix-equation-based solvers (MR-RKSM and G-EKSM) and FOM(100) much narrower, especially for {\tt real} and {\tt complex - conjugate pairs} shifts, to the point that FOM(100) turns out to be the fastest method in the case of {\tt complex - conjugate pairs} shifts. Nevertheless, MR-RKSM and G-EKSM are able to capitalize on the low-rank structure of the solution and remarkably reduce the storage demand.
As for the 2D example, also in this case MR-RKSM is the only method able to converge when adopting {\tt complex - no conjugate pairs} shifts. The quite high computational time needed by MR-RKSM in this case is due to GMRES(50). Indeed, for some poles $\xi_j$, GMRES(50) needs a large number of restarting cycles for solving the related shifted linear systems with $A+\xi_j I$. This can be possibly fixed by employing more performing preconditioning operators.

\end{experiment}

\begin{experiment}\label{Ex2}
In the second experiment, we consider as $A$ the matrix {\tt qc2534}\footnote{Available at {\color{blue}{\tt https://math.nist.gov/MatrixMarket/data/NEP/h2plus/h2plus.html}}} which is part of the data set of matrices used for testing eigensolvers in the SLEPc package~\cite{SLEPc}. This matrix is of moderate dimension, $A\in{\color{blue}\mathbb{C}}^{n\times n}$, $n=2\,534$, $b\in\mathbb{R}^n$ is again a {\color{blue}Gaussian vector\footnote{{\color{blue}{\tt rng('default')}}}with unit norm}. We consider $\ell$ shifts of the type {\tt complex - no conjugate pairs} constructed as before. In particular, $s_j=c+\rho(\cos(\theta_j)+v\rm{i} \sin(\theta_j))$ where $\theta_j=2\pi j/\ell$, $c=-0.8-0.07\rm i$, $\rho=0.2$, and $v=0.1$, for $j=1,\ldots,\ell$ and different values of $\ell$. 

Also for this example, G-EKSM and FOM(100) do not perform well due to the nature of the shifts. We thus decided to skip them here. In addition to testing MR-RKSM, we document the running time devoted to sequentially solving all the shifted linear systems in~\eqref{eq:main} by ``backslash''. Indeed, due to the small value of $n$, one may think that the latter is a more suitable option. Nevertheless, we are going to show how our low-rank approach is competitive even in this scenario. Moreover, we compare the pole selection strategy we proposed in section~\ref{Poles selection} with state-of-the-art procedures agnostic to the underlying shifted linear systems problem. In particular, we employ the strategy called ADM in~\cite{Poles1}. {\color{blue} To this end, we slightly modify the function {\tt rk\_adaptive\_sylvester.m},
available in the package {\tt rk\_adaptive\_sylvester}\footnote{Available at {\tt https://github.com/numpi/rk\_adaptive\_sylvester/tree/main}}, designed for general Sylvester equations of the form $AX-XB-uv^T=0$, in order to perform only a left projection and solve~\eqref{eq:Y_columnwise} at the projected level.} 
Notice, however, that, similarly to any other state-of-the-art scheme for pole selection in rational Krylov methods for Sylvester equations, the strategies presented in~\cite{Poles1} are based on an expression of the residual matrix obtained by imposing a Galerkin condition rather than a minimal residual one as we do in our approach. Therefore, the findings from~\cite{Poles1} may need some adjustments to fully adapt to our framework. This is clearly beyond the scope of this paper. Our goal here is just to illustrate how the simple and somehow natural pole selection strategy from section~\ref{Poles selection} works better than off-the-shelf, involved alternatives that are available in the literature.

\begin{table}[t!]
    \centering
    \begin{tabular}{r|rrr}
    & \multicolumn{3}{c}{$\ell=256$} \\
    & Its.  & Rank($X_m$) & Time (s) \\
    MR-RKSM (Poles as in sec.~\ref{Poles selection}) &{\color{blue}38} & {\color{blue}39} & {\color{blue}0.78}\\
    MR-RKSM (ADM poles~\cite{Poles1}) & {\color{blue}127} &{\color{blue}128} &{\color{blue}7.84} \\
    backslash & - & - & {\color{blue}2.92}\\
    \hline
& \multicolumn{3}{c}{$\ell=512$} \\
    & Its.  & Rank($X_m$) & Time (s) \\
    MR-RKSM (Poles as in sec.~\ref{Poles selection}) &{\color{blue}38} & {\color{blue}39} & {\color{blue}1.05}\\
    MR-RKSM (ADM poles~\cite{Poles1}) & {\color{blue}127} & {\color{blue}128}&{\color{blue}11.80} \\
    backslash & - & - & {\color{blue}6.40}\\
    \hline
& \multicolumn{3}{c}{$\ell=1024$} \\
    & Its.  & Rank($X_m$) & Time (s) \\
    MR-RKSM (Poles as in sec.~\ref{Poles selection}) &{\color{blue}38}& {\color{blue}39} & {\color{blue}1.60}\\
    MR-RKSM (ADM poles~\cite{Poles1}) & {\color{blue}127} & {\color{blue}128}& {\color{blue}20.27}\\
    backslash & - & - & {\color{blue}12.57}\\
    
    \end{tabular}
    \caption{Example~\ref{Ex2}. Performance of the different methods. The reported timings are in seconds. }
    \label{Ex2:tab1}
\end{table}

In Table~\ref{Ex2:tab1}, we report the results obtained by changing the number $\ell$ of shifts.
The reported timings showcase the potential of our rational Krylov approach even when dealing with medium-sized matrices $A$ and a moderate number of shifts $\ell$. Moreover, the timings of the solution process based on backslash clearly grow linearly with $\ell$ whereas MR-RKSM (with our pole selection) is not really affected by this drawback. The poles computed by the ADM strategy from~\cite{Poles1} are not as effective as the ones computed by following section~\ref{Poles selection}, {\color{blue} to the point that MR-RKSM with ADM poles is less performing than the naive backslash solution.} 
{\color{blue} In Figure~\ref{Ex2:fig1}, for $\ell=1024$, we report $\|r_m^{(j)}\|$ in logarithmic scale for all the iterations $m$ ($x$-axis) and shift index $j$ ($y$-axis) in case of our pole selection strategy (left) and with the ADM poles (right). We can notice a very different behavior. In particular, in Figure~\ref{Ex2:fig1} (left) we can spot several blue horizontal lines. Those are the residual norms related to those shifted linear systems whose shift is used as rational Krylov pole. Indeed, as shown in Proposition~\ref{Prop:poles}, those residual norms become zero as soon as this happens. This is not the case for the ADM poles that bring the residual norms down to the target accuracy much more slowly.
}
It is also worth noting that the overall running time of MR-RKSM shows a much more important correlation with $\ell$ when ADM is employed, in contrast to what happens when the strategy from section~\ref{Poles selection} is adopted. This is caused by the cost of ADM which now depends on $\ell$ due to the lack of a right projection in our framework.

  \begin{figure}[t]
  \centering
    \hspace{-6cm}
  \begin{minipage}{1\textwidth}
    \hspace{-6.5cm}
  \centering
  \includegraphics[scale=.45]{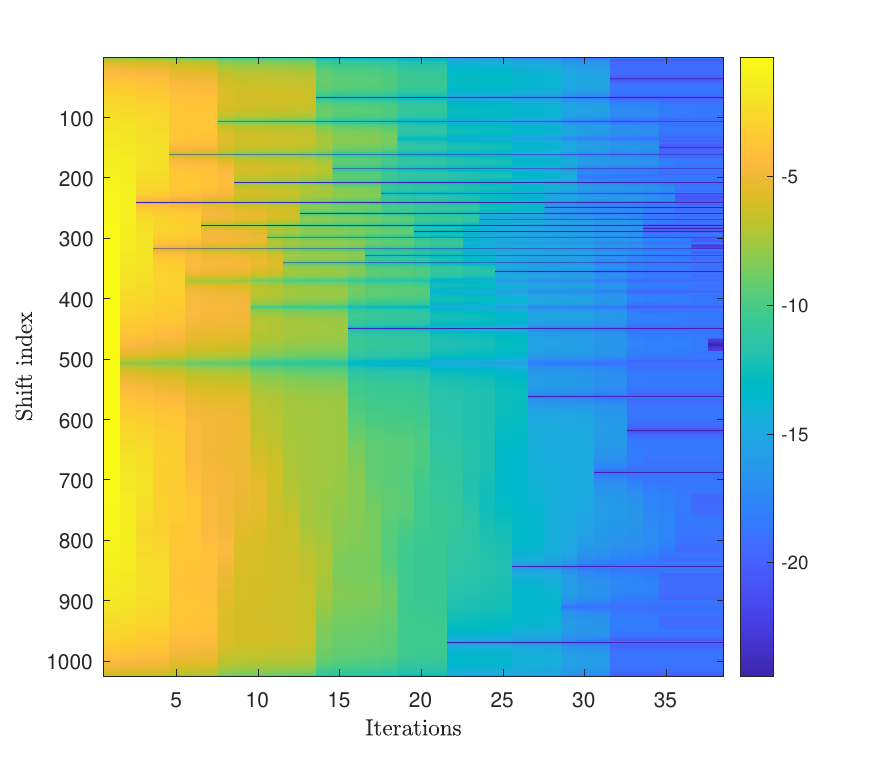}
  \end{minipage}~  \hspace{-6.5cm}
\begin{minipage}{1\textwidth}
    \hspace{-6.5cm}
    \centering
  \includegraphics[scale=.45]{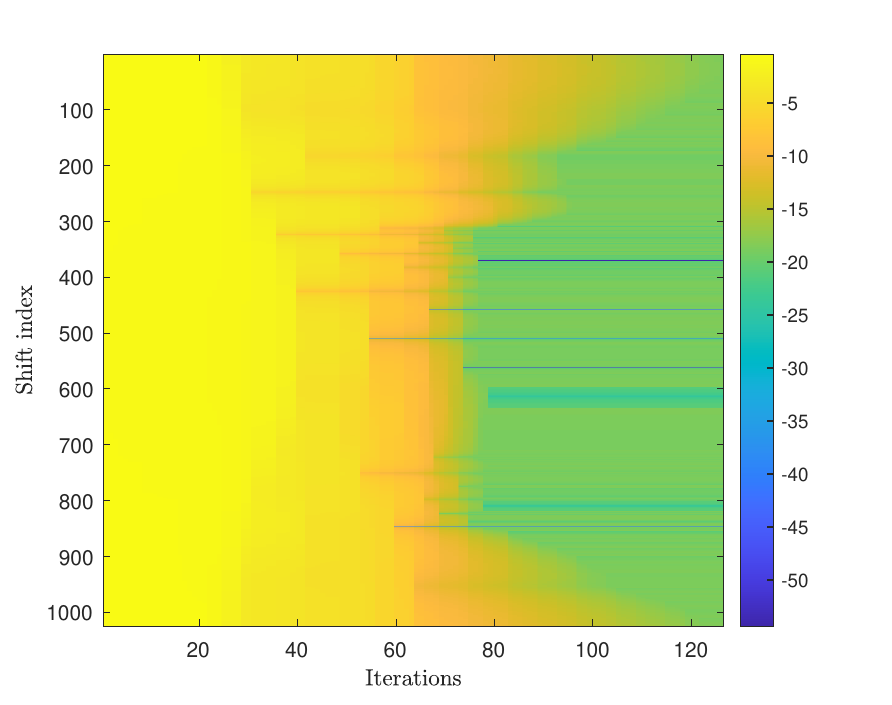}
\end{minipage}

   \caption{{\color{blue}Example~\ref{Ex2}, $\ell=1024$. Heatmap where we report $\|r_m^{(j)}\|$ in logarithmic scale for all iterations $m$ ($x$-axis) and shift indeces $j$ ($y$-axis). Left: MR-RKSM with poles as in sec.~\ref{Poles selection}. Right: MR-RKSM with ADM poles~\cite{Poles1}.
   }
\label{Ex2:fig1}}
  \end{figure}

\end{experiment}
\section{Conclusion}\label{sec:conclusion}
We showed once again that a matrix equation formulation is the way to go when dealing with sequences of shifted linear systems of the form~\eqref{eq:main}. This point of view was first explored in~\cite{Simoncini2010} where an extended Krylov subspace method equipped with a Galerkin condition was proposed. This paper makes an important step further. In particular, thanks to our minimal residual rational Krylov subspace method, we are now able to efficiently solve difficult problems, with nonsymmetric coefficient matrices $A$ and complex shifts not coming in conjugate pairs. Like any projection technique for matrix equation, our solver works well if and only if the exact solution presents a fast decay in its singular values. We portrayed scenarios where we expect this to happen. Moreover, we generalized the approach to the block case, where we have a number of different right-hand sides $b^{(j)}$, and proposed a cheap and effective pole selection strategy for the construction of the rational Krylov subspace.

We would like to mention that the minimal residual condition adopted in this paper can be easily integrated in an extended Krylov subspace method by following, e.g., what has been done in~\cite{Benneretal2023} in the context of shifted linear systems within the ADI solver for Lyapunov equations.
Moreover, even though in our numerical experiments the constructed subspace has always been of very moderate dimension, one could try to adapt the compress-and-restart paradigm presented in~\cite{Kressneretal2021} for polynomial Krylov subspaces to rational ones. This {\color{blue} requires an} understanding of {\color{blue}the} collinearity {\color{blue}properties} of residuals in matrix equations, {\color{blue}which}, to the best of our knowledge, has never been explored in the literature.

{
\color{blue}
We also observed that each of the different Krylov approaches may be preferable for a particular class of shifted linear systems; however, identifying the most efficient method is not straightforward in the absence of spectral information about the matrices involved. Although polynomial Krylov methods do not require the solution of linear systems, they often exhibit slow convergence. Moreover, the large number of iterations typically needed for convergence limits the extent to which the low-rank structure of the solution can be effectively exploited.

Extended Krylov methods, on the other hand, involve solving a sequence of linear systems that share the same coefficient matrix, which can be advantageous. Nevertheless, we observed that for certain examples motivated by e.g., the contour integral method for eigenvalue problem, the convergence of these methods may deteriorate, necessitating a large number of iterations and thereby diminishing the benefits of having a low-rank solution.

Our proposed approach which is based on rational Krylov methods requires the solution of a number of shifted linear systems, which is feasible when an efficient linear solver is available. The principal advantage of this method is its rapid convergence combined with very low memory requirements. Indeed, our numerical experiments indicate that the memory usage is close to optimal.
}

\section*{Acknowledgments}
 {\color{blue} We thank the anonymous reviewers for their insightful comments and remarks that helped improving the quality of this paper. In particular, the minimal residual formulation~\eqref{eq:minimal_residual_condition} is a simpler and cleaner form of the originally proposed one. We also thank Lars Grasedyck for pointing us to the results in~\cite{Grasedyck04}.}
 
The first author is supported by the Ada Lovelace Centre Programme at the Scientific Computing Department, STFC.
The second author is member of the INdAM Research
Group GNCS. His work
was partially supported by the European Union - NextGenerationEU under the National Recovery and Resilience Plan (PNRR) - Mission 4 Education and research
- Component 2 From research to business - Investment 1.1 Notice Prin 2022 - DD N. 104 of 2/2/2022,
entitled “Low-rank Structures and Numerical Methods in Matrix and Tensor Computations and their
Application”, code 20227PCCKZ – CUP J53D23003620006.
\bibliographystyle{siamplain}
\bibliography{refs}

\end{document}